\newtheorem{theorem}{Theorem}[section]
\newtheorem{lemma}[theorem]{Lemma}
\newtheorem{corollary}[theorem]{Corollary}
\theoremstyle{definition}
\theoremstyle{remark}
\newtheorem{remark}[theorem]{Remark}
\numberwithin{equation}{section}
\begin{document}
\title[Quasisimilarity and compact perturbations]{Quasisimilarity and compact perturbations}
\author [S. Mecheri,]{Salah Mecheri}

\address{Department of Mathematics\\
Faculty of Science and Informatics\\
El Bachir Ibrahimi University, Bordj Bou Arreridj, Algeria.}
\email{salahmecheri20@gmail.com}
 \subjclass{47A30, 47B47, 47B20}
\keywords{Hyponormal operators, $(M,k)$- quasi-*-class $Q$ operators, $k$-quasi-*-class $%
\mathbb{A}$ operators.} \maketitle
\begin{abstract}
In this paper we show that quasisimilar $n$-tuples of tensor
products of $m$-isometric operators have the same
spectra, essential spectra and indices. The properties of single
Fredholm operators possess \cite{4} is related to an important
property which has a leading role on the theory of Fredholm
operators: Fredholm n-tuples of operators. It is well known that a
Fredholm operator of index zero can be perturbed by a compact
operator to an invertible operator. In \cite[Problem 3]{5} the
author asked if this property holds in several variables. R. Gelca
in \cite{10} gave an example showing that this perturbation
property fails in several variables. In this paper we give a
positive answer to this question in case of tensor products of
some classes of operators.
\end{abstract}

\section{Introduction}
Let $H$ be a separable infinite dimensional complex Hilbert space,
and let $B(H)$ denote the algebra of all bounded linear operators
on $H$. For any operator $A$ in $B(H)$ set, as usual, $|A|=
(A^{*}A)^{{1\over 2}}$ and
$$[A^{*};A] = A^{*}A-AA^{*} = |A|^{2}-|A^{*}|^{2}$$ (the self commutator of $A$), and consider the
following standard definitions: $A$ is normal if $A^{*}A =
AA^{*}$, hyponormal if $A^{*}A-AA^{*}\geq 0$, $p$-hyponormal if
$|A|^{2p} -|A^{*}|^{2p}\geq 0 (0 < p \leq 1)$, and $(p;
k)$-quasihyponormal if $A^{*k}((A^{*}A)^{p}-(AA^{*})^{p})A^{k}\geq
0 (0 < p \leq 1; k\in \mathbb{N})$. If $p = 1, k = 1$ and $p = k =
1$, then $A$ is $k$-quasihyponormal, $p$-quasihyponormal and
quasihyponormal respectively. $A$ is said to be normaloid if
$||A|| = r(A)$ (the spectral radius of $A$). Let $(N), (HN),
(Q(p)); (Q(p; k))$ and $(NL)$ denote the classes constituting of
normal, hyponormal, $p$-quasihyponormal, $(p; k)$-
quasihyponormal, and normaloid operators. These classes are
related by proper inclusion: $$(N) \subset (HN) \subset
(Q(p))\subset (Q(p; k)).$$ (see \cite{16, Mecheri44, Mecheri19}). A $(p;
k)$-quasihyponormal operator is an extension of hyponormal,
$p$-hyponormal, $p$-quasihyponormal and $k$- quasihyponormal. A
1-hyponormal operator is called hyponormal operator, which has
been studied by many authors and it is known that hyponormal
operators have many interesting properties similar to those of
normal operators (see\cite{30}). A. Aluthge, B.C.Gupta, A.C. Arora
and P.Arora introduced $p$-hyponormal, $p$-quasihyponormal and
$k$-quasihyponormal operators, respectively (see\cite{2, 3, 7}),
and now it is known that these operators have many interesting
properties (see\cite{15, 23}). It is obvious that $p$-hyponormal
operators are $q$-hyponormal for $0 < q\leq p$ by Lowner-Heinz's
inequality (See \cite{18}). But $(p; 1)$-quasihyponormal operators
are not always $(q; 1)$-quasihyponormal operators for $0 < q \leq
p$ (see\cite{26}). Also, it is obvious that $(p;
k)$-quasihyponormal operators are $(p; k + 1)$-quasihyponormal. In
this paper we show that quasisimilar $n$-tuples of tensor products
of $(p; k)$-quasihyponormal operators have the same spectra,
essential spectra and indices. The properties of single Fredholm
operators possess \cite{4} is related to an important property
which has a leading role on the theory of Fredholm operators:
Fredholm n-tuples of operators. It is well known that a Fredholm
operator of index zero can be perturbed by a compact operator to
an invertible operator. In \cite[Problem 3]{5} the author asked if
this property holds in several variables. R. Gelca in \cite{10}
gave an example showing that this perturbation property fails in
several variables. In this paper we give a positive answer to this
question in case of tensor products of some classes of operators
\section{Quasisimilarity}
We begin by the following notations, definitions and Lemmas which
will be used for the sequel. An operator $T\in B(H)$ is said to
have Bishop's property $(\beta)$ if $(T-z)f_{n}(z)rightarrow 0$
uniformly on every compact subset of $D$ for analytic functions
$f_{n}(z)$ on $D$, then $f_{n}(z)\rightarrow 0$ uniformly on every
compact subset of $D$ \cite{Mecheri11, Mecheri22}. T is said to
have the single valued extension property if there exists no
nonzero analytic function f such that $(T - z)f(z)\equiv 0$. It is
clear if $T$ has Bishop's property ($\beta$), then $T$ has the
single valued extension property \cite{Mecheri33}. In this case,
the local resolvent $\rho_{T}(x)$ of $x\in H$ denotes the maximal
open set on which there exists a unique analytic function $f(z)$
satisfying $(T-z)f(z)\equiv x$. The local spectrum $\sigma_{T}
(x)$ of $x\in H$ is defined by $\sigma_{T} (x) =
\mathbb{C}\setminus\rho_{T}(x)$ and $X_{T} (F) = \{x\in H :
\sigma_{T} (x)\subset F\}$ for a subset $F \subset \mathbb{C}$.
$T$ is said to have finite ascent if $\ker T^{m} = \ker T^{m+1}$
for some positive integer $m$, and finite descent if $ran T^{n} =
ran T^{n+1}$ for some positive integer $n$. Laursen (Proposition
1.8 of (\cite{17}) proved that if $T-\lambda$ has finite ascent
for all $\lambda\in \mathbb{C}$, then $T$ has the single valued
extension property.
\begin{lemma}\cite{24} Let $A\in B(H)$ be a $(p;
k)$-quasihyponormal operator. If $M$ is an invariant subspace for
$A$, then the restriction of $A$ to $M$ is a $(p;
k)$-quasihyponormal operator.
\end{lemma}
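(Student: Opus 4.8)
The plan is to exploit the triangular representation of $A$ determined by the invariant subspace $M$ and to reduce the $(p;k)$-quasihyponormality of $B:=A|_{M}$ to two global operator inequalities on $M$, which I will then obtain from the integral representation of the power function. First I would fix the decomposition $H=M\oplus M^{\perp}$ and write $A=\begin{pmatrix} B & C \\ 0 & D\end{pmatrix}$, so that $B=A|_{M}$; let $J:M\to H$ be the inclusion, so that $J^{*}XJ$ is the $(1,1)$-block (compression) of $X$. A direct block computation gives $J^{*}(A^{*}A)J=B^{*}B$ and $J^{*}(AA^{*})J=BB^{*}+CC^{*}$, and, since $M$ is $A$-invariant, $A^{k}(x,0)=(B^{k}x,0)$ for every $x\in M$. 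Testing the defining inequality $A^{*k}\big((A^{*}A)^{p}-(AA^{*})^{p}\big)A^{k}\geq 0$ on the vectors $(x,0)$, $x\in M$, and setting $z=B^{k}x$ then yields
\[
\big\langle \big(J^{*}(A^{*}A)^{p}J-J^{*}(AA^{*})^{p}J\big)z,\,z\big\rangle\geq 0\qquad(z\in\operatorname{ran}B^{k}).
\]

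Consequently it suffices to establish the two operator inequalities on $M$
\[
(B^{*}B)^{p}\geq J^{*}(A^{*}A)^{p}J\qquad\text{and}\qquad (BB^{*})^{p}\leq J^{*}(AA^{*})^{p}J,
\]
for chaining these with the displayed inequality gives $\langle\big((B^{*}B)^{p}-(BB^{*})^{p}\big)z,z\rangle\geq 0$ for every $z\in\operatorname{ran}B^{k}$, which is precisely $B^{*k}\big((B^{*}B)^{p}-(BB^{*})^{p}\big)B^{k}\geq 0$, the $(p;k)$-quasihyponormality of $B$. To prove both inequalities I would use, for $0<p<1$, the representation $t^{p}=c_{p}\int_{0}^{\infty}\big(1-s(s+t)^{-1}\big)s^{p-1}\,ds$ with $c_{p}=\pi^{-1}\sin\pi p>0$ (the case $p=1$ being immediate from the block computation). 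Applying it to the relevant positive operators reduces each inequality to a resolvent estimate valid for every $s>0$: the first to $J^{*}(s+A^{*}A)^{-1}J\geq (s+B^{*}B)^{-1}$ and the second to $(s+BB^{*})^{-1}\geq J^{*}(s+AA^{*})^{-1}J$. Each $(1,1)$-block of a resolvent I would evaluate by the Schur-complement formula $[(s+S)^{-1}]_{11}=\big(s+S_{11}-S_{12}(s+S_{22})^{-1}S_{21}\big)^{-1}$.

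The first resolvent estimate is then painless: with $S=A^{*}A$ one has $S_{11}=B^{*}B$, and since $S_{12}(s+S_{22})^{-1}S_{21}\geq 0$ the Schur complement is $\leq s+B^{*}B$, so by antitonicity of inversion its inverse is $\geq (s+B^{*}B)^{-1}$. The second estimate is the real obstacle, because here the compression factor $CC^{*}$ enters with the wrong sign and the naive Jensen/Hansen compression inequalities for $t^{p}$ point the opposite way. With $S=AA^{*}$ one has $S_{11}=BB^{*}+CC^{*}$, $S_{12}=CD^{*}$, $S_{21}=DC^{*}$, $S_{22}=DD^{*}$, and after inverting both sides the desired estimate becomes the clean inequality
\[
CD^{*}(s+DD^{*})^{-1}DC^{*}\leq CC^{*}.
\]
This I would settle using the identity $D^{*}(s+DD^{*})^{-1}D=(s+D^{*}D)^{-1}D^{*}D$, whose right-hand side is a function of $D^{*}D$ bounded above by $I$; compressing by $C(\,\cdot\,)C^{*}$ then squeezes the left side between $0$ and $CC^{*}$. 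Integrating the two resulting positive resolvent differences against $c_{p}s^{p-1}\,ds$ delivers the two operator inequalities, and the reduction of the first paragraph completes the proof. The main difficulty, and the only step where the triangular structure of $A$ is used in an essential way beyond compression, is this second resolvent estimate; everything else is bookkeeping.
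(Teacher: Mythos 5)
The paper does not prove this lemma at all --- it is quoted verbatim from reference [24] (Tanahashi--Uchiyama--Cho) --- so there is no in-paper argument to compare against; what you have written is a correct, self-contained proof. The reduction in your first paragraph is sound: testing the defining inequality on $(x,0)$ and using $A^{k}(x,0)=(B^{k}x,0)$ does give $\langle (J^{*}(A^{*}A)^{p}J-J^{*}(AA^{*})^{p}J)B^{k}x,B^{k}x\rangle\geq 0$, and the two compression inequalities $(B^{*}B)^{p}\geq J^{*}(A^{*}A)^{p}J$ and $(BB^{*})^{p}\leq J^{*}(AA^{*})^{p}J$ chain exactly as you say to yield $B^{*k}((B^{*}B)^{p}-(BB^{*})^{p})B^{k}\geq 0$. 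The first inequality is Hansen's inequality (an isometric compression of an operator-concave function), and your integral/Schur-complement derivation of it is fine since $S_{12}(s+S_{22})^{-1}S_{12}^{*}\geq 0$. You are also right that the second inequality is the genuine content, since Hansen points the wrong way there; your reduction to $CD^{*}(s+DD^{*})^{-1}DC^{*}\leq CC^{*}$ via the Schur complement of $s+AA^{*}$ is correct, and the identity $D^{*}(s+DD^{*})^{-1}D=(s+D^{*}D)^{-1}D^{*}D\leq I$ settles it. The only points worth making explicit in a final write-up are the invertibility/positivity of the Schur complements needed to invoke antitonicity of inversion (immediate, since $s+A^{*}A$ and $s+AA^{*}$ are bounded below by $s>0$) and the trivial case $p=1$, which you already flag.
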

\begin{lemma} \cite{12} Let $A;B\in B(H)$. Then $A;B$ are $(p; k)$-quasihyponormal if
and only if $A \otimes B$ is $(p; k)$-quasihyponormal.
 \end{lemma}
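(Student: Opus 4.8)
The plan is to reduce the tensor statement to an inequality between four positive operators and then treat the two implications separately, with the forward one carrying all the difficulty. First I would record the standard identities $(A\otimes B)^*(A\otimes B)=A^*A\otimes B^*B$ and $(A\otimes B)(A\otimes B)^*=AA^*\otimes BB^*$, together with $(A\otimes B)^k=A^k\otimes B^k$ and $(A\otimes B)^{*k}=A^{*k}\otimes B^{*k}$. Since a tensor product of positive operators is positive and its spectral measure factorises, the functional calculus gives $(A^*A\otimes B^*B)^p=(A^*A)^p\otimes(B^*B)^p$ and likewise for $AA^*\otimes BB^*$. Substituting, the defining quantity for $A\otimes B$ factors as
\[
(A\otimes B)^{*k}\big(((A\otimes B)^*(A\otimes B))^p-((A\otimes B)(A\otimes B)^*)^p\big)(A\otimes B)^k=P\otimes R-Q\otimes S,
\]
where $P=A^{*k}(A^*A)^pA^k$, $Q=A^{*k}(AA^*)^pA^k$, $R=B^{*k}(B^*B)^pB^k$, $S=B^{*k}(BB^*)^pB^k$ are positive, and $A$ (resp.\ $B$) is $(p;k)$-quasihyponormal exactly when $P\ge Q$ (resp.\ $R\ge S$). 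Thus the lemma becomes: $P\otimes R\ge Q\otimes S$ if and only if $P\ge Q$ and $R\ge S$.

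For $(\Leftarrow)$ I would use the telescoping identity
\[
P\otimes R-Q\otimes S=(P-Q)\otimes R+Q\otimes(R-S).
\]
If $P\ge Q$ and $R\ge S$, both summands are tensor products of positive operators, hence positive, so $P\otimes R\ge Q\otimes S$. This is the routine direction, and it is the one actually needed to keep tensor products inside the class.

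For $(\Rightarrow)$ I would test $P\otimes R\ge Q\otimes S$ on elementary tensors $x\otimes y$ to obtain $\langle Px,x\rangle\langle Ry,y\rangle\ge\langle Qx,x\rangle\langle Sy,y\rangle$ for all $x,y$, which separates the variables off the zero set of the forms. Arguing by contradiction, if $A$ fails to be $(p;k)$-quasihyponormal there is $x_0$ with $\langle Px_0,x_0\rangle<\langle Qx_0,x_0\rangle$; assuming first $\langle Px_0,x_0\rangle>0$, division yields a strict domination $R\ge cS$ with $c=\langle Qx_0,x_0\rangle/\langle Px_0,x_0\rangle>1$. I would then try to exclude such a domination for $B\ne 0$. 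For $k=1$ this closes cleanly: $R=B^*(B^*B)^pB$ gives $\langle Ry,y\rangle\le\|B\|^{2p}\|By\|^2$, while the intertwining relation yields $S=(B^*B)^{p+1}$, and Jensen's inequality applied to the spectral measure gives $\langle Sy,y\rangle\ge\|By\|^{2(p+1)}$; hence $R\ge cS$ would force $\|By\|\le\|B\|\,c^{-1/(2p)}<\|B\|$ for every unit $y$, i.e.\ $B=0$. The residual case $\langle Px_0,x_0\rangle=0$ forces $S=0$, and for $k=1$ this again gives $B=0$; symmetrically one recovers that $B$ is $(p;k)$-quasihyponormal.

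The main obstacle is that this closing argument is essentially special to $k=1$. For $k\ge 2$ the operators $R=B^{*k}(B^*B)^pB^k$ and $S=B^{*k}(BB^*)^pB^k$ are controlled by $B^k$ rather than $B$, and neither the upper bound on $\langle Ry,y\rangle$ nor the Jensen lower bound on $\langle Sy,y\rangle$ survives in usable form, so the strict domination $R\ge cS$ cannot be ruled out by the same elementary estimate. There is moreover a genuine degeneracy to confront: if $B\ne 0$ is nilpotent with $B^k=0$, then $(A\otimes B)^k=A^k\otimes B^k=0$, so $A\otimes B$ is \emph{vacuously} $(p;k)$-quasihyponormal for every $A$, while $B$ is trivially in the class and $A$ is unconstrained. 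Hence the forward implication can hold only after such degeneracies are excluded (for instance by assuming the powers $A^k$, $B^k$ are nonzero), and forcing the scaling relation from the separated inequality down to equality is precisely where the real work of the proof must be concentrated.
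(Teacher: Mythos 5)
The paper gives no proof of this lemma at all --- it is imported verbatim from the cited source [12] --- so there is no internal argument to compare against; your proposal has to stand on its own. On its own terms, most of it is right: the reduction to $P\otimes R\ge Q\otimes S$ with $P=A^{*k}(A^*A)^pA^k$, etc., is the correct normal form; the telescoping identity $P\otimes R-Q\otimes S=(P-Q)\otimes R+Q\otimes(R-S)$ settles the backward implication completely, and you are right that this is the only direction the paper ever uses (Lemma 2.6 needs only that tensoring a $(p;k)$-quasihyponormal $A_i$ with identities stays in the class). Your $k=1$ forward argument is also sound: testing on elementary tensors, extracting $R\ge cS$ with $c>1$, and playing the bound $\langle Ry,y\rangle\le\|B\|^{2p}\|By\|^2$ against the H\"older--McCarthy estimate $\langle Sy,y\rangle=\langle (B^*B)^{p+1}y,y\rangle\ge\|By\|^{2(p+1)}$ does force $\|B\|\le c^{-1/(2p)}\|B\|$ and hence $B=0$. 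Your diagnosis of the degeneracy is correct and important: for $k\ge2$ a nonzero $B$ with $B^k=0$ makes $R=S=0$, so $A\otimes B$ is vacuously in the class for every $A$, and the ``only if'' half of the lemma as printed is simply false (it already fails for $k=1$ if one of the factors is $0$). The theorem in the cited source is stated for nonzero $A,B$ and carries a degenerate nilpotent alternative of exactly this kind; the lemma here is an over-simplified citation.

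The genuine gap is the forward implication for $k\ge2$ once those degeneracies are excluded: you arrive at $R\ge cS$, i.e.\ $B^{*k}\bigl((B^*B)^p-c(BB^*)^p\bigr)B^k\ge0$ with $c>1$, and then stop, correctly observing that your Jensen estimate does not survive the passage from $B$ to $B^k$. The way the literature closes this is not a norm estimate but a spectral one, and it is already latent in the paper's Lemma 2.3: triangulate $B$ as $\bigl(\begin{smallmatrix}B_1&B_2\\0&B_3\end{smallmatrix}\bigr)$ with respect to $\overline{R(B^k)}\oplus N(B^{*k})$; the inequality with the constant $c$ forces the corner $B_1$ to satisfy $|B_1|^{2p}\ge c|B_1^*|^{2p}$, so $B_1$ is $p$-hyponormal, hence normaloid, and evaluating along an approximate eigenvector for a point $\lambda\in\sigma(B_1)$ of maximal modulus gives $|\lambda|^{2p}\ge c|\lambda|^{2p}$, whence $B_1=0$ and $B$ is nilpotent of the excluded order --- the desired contradiction. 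Without some such argument your proof establishes the implication the paper actually needs plus the $k=1$ equivalence, but not the stated biconditional for general $k$; and no argument can, until the statement is first corrected to exclude (or list) the nilpotent cases.
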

\begin{lemma}\cite{24} Let $S\in B(H)$ be $(p; k$)-quasihyponormal, the closure of
range of $S^{k}$ not dense and $S = \left(\begin{array}{lll}
        S_{1} &S_{2}\\
          0&S_{3}\\
 \end{array}\right)~\mbox{on} ~H=\overline{R(S^{k})}\oplus N(T^{*}),$
 then $S_{1}$ is $p$-hyponormal, $S^{k}_{3}=\{0\}$ and $\sigma(S)=\sigma(S_{1})\cup\{0\}$.
 \end{lemma}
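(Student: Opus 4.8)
The plan is to derive all three conclusions from the single structural fact that $M:=\overline{R(S^k)}$ is invariant for $S$, which is what forces the displayed upper-triangular form. First I would record that $S\bigl(R(S^k)\bigr)=R(S^{k+1})\subseteq R(S^k)\subseteq M$, so by continuity $SM\subseteq M$; this is exactly the vanishing of the $(2,1)$ entry, it identifies $S_1$ with the restriction $S|_M$, and it identifies $M^\perp$ with $N(S^{*k})$ (which is what the display means by $N(T^*)$). For $S_3^k=0$ I would note that since $R(S^k)\subseteq M$, the projection $I-P$ onto $M^\perp$ annihilates $R(S^k)$, so the bottom row of the block matrix of $S^k$ vanishes; but the $(2,2)$ entry of the $k$-th power of the triangular matrix is $S_3^k$, whence $S_3^k=0$. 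In particular $S_3$ is nilpotent and $\sigma(S_3)=\{0\}$.

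For the spectrum I would invoke the standard spectral identity for an upper-triangular operator matrix,
$$\sigma(S_1)\cup\sigma(S_3)=\sigma(S)\cup\bigl(\sigma(S_1)\cap\sigma(S_3)\bigr).$$
Since $\sigma(S_3)=\{0\}$, and since $R(S^k)$ is not dense, $S^k$ is not surjective, hence neither $S^k$ nor $S$ is invertible and $0\in\sigma(S)$. Then the correction term $\sigma(S_1)\cap\{0\}$ is contained in $\{0\}\subseteq\sigma(S)$, so the identity collapses to $\sigma(S)=\sigma(S_1)\cup\{0\}$.

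The real content is that $S_1$ is $p$-hyponormal, and this is where I expect the main difficulty. Write $A=|S|^2$, $B=|S^*|^2$, let $P$ be the orthogonal projection onto $M$, and write $X_M=PXP|_M$ for the compression. The $(p;k)$-quasihyponormality $S^{*k}(A^p-B^p)S^k\ge 0$ together with the density of $R(S^k)$ in $M$ gives $\langle(A^p-B^p)u,u\rangle\ge 0$ for every $u\in M$, i.e.\ $(A^p)_M\ge(B^p)_M$. Because the $(1,1)$ block of $A$ is exactly $S_1^*S_1$, we have $A_M=|S_1|^2$, so Hansen's inequality $(X_M)^p\ge(X^p)_M$ yields $|S_1|^{2p}=(A_M)^p\ge(A^p)_M\ge(B^p)_M$.

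What remains, and what I expect to absorb essentially all of the work, is to dominate $|S_1^*|^{2p}=(S_1S_1^*)^p$ by $(B^p)_M$. Here $S_1S_1^*\le B_M$, the defect being $S_2S_2^*\ge 0$, but since the compression and the operator-monotone map $t\mapsto t^p$ do not commute one cannot simply pass from $S_1S_1^*\le B_M$ to $(S_1S_1^*)^p\le(B^p)_M$. When $k=1$ this is painless: then $S_3=0$, the bottom row of $S$ vanishes, $B=SS^*$ is block-diagonal, so $(B^p)_M=(B_M)^p$ exactly and Lowner monotonicity closes the chain $|S_1|^{2p}\ge(B_M)^p=(S_1S_1^*+S_2S_2^*)^p\ge(S_1S_1^*)^p=|S_1^*|^{2p}$. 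For general $k$ the matrix $B$ is no longer block-diagonal, so I would exploit $S_3^k=0$ together with the finer Hansen--Furuta estimates (alternatively reducing to the dense-range situation, where $S^{*k}(A^p-B^p)S^k\ge 0$ with $\overline{R(S^k)}$ the whole space forces $A^p\ge B^p$ outright). Reconciling the compressed $p$-power on the $|S^*|$-side with $|S_1^*|^{2p}$ is the step I expect to be the genuine obstacle.
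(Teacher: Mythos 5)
The paper itself gives no proof of this lemma---it is quoted from \cite{24}---so the comparison can only be with the published argument. Your reductions are correct where you carry them out: the invariance of $M=\overline{R(S^k)}$, the identification $M^{\perp}=N(S^{*k})$, the vanishing of $S_3^k$, and the spectral identity all match the standard treatment. But the heart of the lemma is the $p$-hyponormality of $S_1$, and for $k\geq 2$ you do not prove it. You correctly assemble the chain $|S_1|^{2p}=(A_M)^p\geq (A^p)_M\geq (B^p)_M$ and then explicitly leave open the remaining inequality $(B^p)_M\geq |S_1^*|^{2p}$, offering only the unexecuted suggestions of ``finer Hansen--Furuta estimates'' or a reduction to the dense-range case; the latter is not actually available, since $R(S_1^k)=S^k(M)$ need not be dense in $M$. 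As written, the argument is therefore incomplete at precisely the one step that carries the content of the lemma.

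The missing idea is not a reverse Hansen inequality but a change in the order of operations: apply L\"owner--Heinz \emph{before} compressing, to a minorant of $B=SS^*$ that is already supported in $M$. With $P$ the orthogonal projection onto $M$, invariance gives $SP=PSP$, so
$$B=SS^*\;\geq\; SPS^*=(SP)(SP)^*=\left(\begin{array}{ll} S_1S_1^* & 0\\ 0 & 0\end{array}\right),$$
the inequality holding because $B-SPS^*=S(I-P)S^*\geq 0$. Operator monotonicity of $t\mapsto t^p$ then yields $B^p\geq (S_1S_1^*)^p\oplus 0$, and compressing by $P$ now loses nothing because the minorant lives entirely in $M$: $(B^p)_M\geq (S_1S_1^*)^p=|S_1^*|^{2p}$. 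This closes your chain uniformly in $k$ (your separate treatment of $k=1$ becomes unnecessary) and is how the source cited by the paper argues.
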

 In the following lemma we will show that a $(p; k)$-quasihyponormal operator has Bishop's property
$(\beta)$.

\begin{lemma} Let $T\in B(H)$ be $(p; k)$-quasihyponormal. Then $T$ has Bishop's
property $(\beta)$. Hence $T$ has the single valued extension
property.
\end{lemma}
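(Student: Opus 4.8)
The plan is to reduce the $(p;k)$-quasihyponormal case to the $p$-hyponormal case by means of the triangularization supplied by Lemma~2.3, and then to glue the pieces together analytically. I would take as the single external input the known fact that every $p$-hyponormal operator has Bishop's property $(\beta)$ (indeed such operators are subscalar; see \cite{15}); the remainder is bookkeeping on how $(\beta)$ propagates through an upper-triangular $2\times 2$ operator matrix. The genuine depth of the statement is hidden in that citation, while the new steps are elementary.

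First I would dispose of the easy case $\overline{R(T^{k})}=H$. Writing the defining inequality $T^{*k}\big(|T|^{2p}-|T^{*}|^{2p}\big)T^{k}\ge 0$ as $\langle(|T|^{2p}-|T^{*}|^{2p})\,T^{k}x,\,T^{k}x\rangle\ge 0$ for every $x\in H$, the vector $y=T^{k}x$ ranges over the dense set $R(T^{k})$, so continuity of $|T|^{2p}-|T^{*}|^{2p}$ forces $\langle(|T|^{2p}-|T^{*}|^{2p})y,y\rangle\ge 0$ for all $y\in H$. Hence $|T|^{2p}-|T^{*}|^{2p}\ge 0$, i.e.\ $T$ is $p$-hyponormal, and we are done by the cited fact.

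In the remaining case $\overline{R(T^{k})}\neq H$, Lemma~2.3 gives the decomposition
\[
T=\begin{pmatrix} T_{1} & T_{2}\\ 0 & T_{3}\end{pmatrix}\quad\text{on } H=\overline{R(T^{k})}\oplus N(T^{*}),
\]
with $T_{1}$ $p$-hyponormal and $T_{3}^{k}=0$. Thus $T_{1}$ has $(\beta)$ by the cited result, while $T_{3}$, being nilpotent (hence $\sigma(T_{3})=\{0\}$), has $(\beta)$ as well: if $h_{n}$ are analytic with $(T_{3}-z)h_{n}(z)\to 0$ uniformly on compacta of an open set $U$, then for $z\neq 0$ the inverse $(T_{3}-z)^{-1}=-\sum_{j=0}^{k-1} T_{3}^{\,j}/z^{\,j+1}$ is uniformly bounded on compacta avoiding $0$, giving $h_{n}\to 0$ on compacta of $U\setminus\{0\}$, and the maximum modulus principle removes the isolated singularity at $0$, so $h_{n}\to 0$ on $U$.

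It then remains to glue. Given analytic $f_{n}=(g_{n},h_{n})$ on $U$ with $(T-z)f_{n}(z)\to 0$ uniformly on compacta, the second coordinate reads $(T_{3}-z)h_{n}(z)\to 0$, so $(\beta)$ for $T_{3}$ forces $h_{n}\to 0$; then $T_{2}h_{n}(z)\to 0$ since $T_{2}$ is bounded, whence the first coordinate $(T_{1}-z)g_{n}(z)=\big[(T-z)f_{n}(z)\big]_{1}-T_{2}h_{n}(z)\to 0$, and $(\beta)$ for $T_{1}$ forces $g_{n}\to 0$; therefore $f_{n}\to 0$, so $T$ has $(\beta)$. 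The implication $(\beta)\Rightarrow$ SVEP recorded above then completes the proof. The main obstacle is not any single estimate but ensuring that the gluing is valid: this hinges entirely on the \emph{upper}-triangular shape (the zero in the lower-left corner) furnished by Lemma~2.3, which guarantees that the off-diagonal term $T_{2}$ feeds only forward, into the component that is controlled last.
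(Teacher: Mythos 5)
Your proposal is correct and follows essentially the same route as the paper: triangularize $T$ via Lemma~2.3 on $H=\overline{R(T^{k})}\oplus N(T^{*k})$, invoke property $(\beta)$ for the $p$-hyponormal corner $T_{1}$ (Kimura's result) and for the nilpotent corner $T_{3}$, and glue through the upper-triangular structure exactly as the paper does. Your treatment is in fact slightly more careful than the paper's, since you separately dispose of the dense-range case (where Lemma~2.3 does not apply and $T$ is outright $p$-hyponormal) and you supply an explicit Neumann-series/maximum-principle argument for $(\beta)$ of the nilpotent block rather than asserting it.
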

\begin{proof} Let $(T-z)f_{n}(z)\rightarrow 0$ uniformly on every compact subset of
$D$ for analytic functions $f_{n}(z)$ on $D$. Then we can write $$
\left(\begin{array}{lll}
        T_{1}-z &T_{2}\\
          0&T_{3}-z\\
 \end{array}\right) \left(\begin{array}{lll}
        f_{n_{1}}(z) \\
        f_{n_{2}}(z)
 \end{array}\right)
 =\left(\begin{array}{lll}
       (T_{1}-z) f_{n_{1}}(z)+T_{2}f_{n_{2}}(z) \\
        (T_{3}-z)f_{n_{2}}(z)
 \end{array}\right)\rightarrow 0.$$
 Since $T_{3}$ is nilpotent, $T_{3}$ has
Bishop's property ($\beta$). Hence $f_{n_{2}}(z)\rightarrow 0$
uniformly on every compact subset of D. Then ($T_{1} -
z)f_{n_{1}}(z)\rightarrow 0$. Since $T_{1}$ is $p$-hyponormal,
$T_{1}$ has Bishop's property ($\beta$) by Kimura \cite{13}. Hence
$f_{n_{1}}(z)\rightarrow 0$ uniformly on every compact subset of
$D$. Thus $T$ has Bishop's property ($\beta$).
\end{proof}
Let $\mathbb{T} = (T_{1}; . . . ; T_{n})\in B^{n}(H)$ be a
commuting $n$-tuple of operators in $B(H)$. Recall that
(cite{4,8}), $\mathbb{T}$ is said to be invertible if the Koszul
complex for $\mathbb{T}$, denoted $K(\mathbb{T};H)$ is exact at
every stage. Also, $\mathbb{T}$ is said to be Fredholm if the
Koszul complex $K(\mathbb{T};H)$ is Fredholm, i.e., all homologies
of $K(\mathbb{T};H)$ are finite dimensional. In this case the
index of $\mathbb{T}$, denoted $ind T$, is defined as the Euler
characteristic of $K(\mathbb{T};H)$. If $\mathbb{T}\in B^{n}(H)$
is Fredholm with index zero, then we say that $\mathbb{T}$ is
Weyl. We shall write $\sigma_{T} (T), \sigma_{T_{e}}(T)$, and
$\sigma_{T_{w}}(T)$ for the Taylor spectrum, the Taylor essential
spectrum and Taylor-Weyl spectrum of $T$, respectively, where
$$\sigma_{} (\mathbb{T}) = \{\lambda= (\lambda_{1}; . . . ;
\lambda_{n})\in \mathbb{C}^{n} :\, \mathbb{T}- \lambda\, \mbox{is
not invertible}\};$$
$$\sigma_{T_{e}}(T) = \{\lambda=
(\lambda_{1}; . . . ; \lambda_{n})\in \mathbb{C}^{n} :\,
\mathbb{T} - \lambda\, \mbox{is not Fredholmg};$$ and
$$\sigma_{T_{w}}(T) = \{\lambda= (\lambda_{1}; . . . ;
\lambda_{n})\in \mathbb{C}^{n} :\, \mathbb{T} - \lambda\, \mbox{is
not Weyl}.$$ For any polydisk $D\in \mathbb{C}^{n}$, let
$\mathbb{O}$ denote the Fr\'echet space of $H$-valued analytic
functions on $D$. Then we say ([7]) that a commuting $n$-tuple
$\mathbb{T}$ has the single extension property, shortened to SVEP,
if the Koszul complex $K((\mathbb{T} - \lambda);\mathbb{O}(D;H))$
is exact in positive degrees and $\mathbb{T}$ has the Bishop's
property ($\beta$) if it has SVEP and its Koszul complex has also
separated homology in degree zero. Obviously, the following
implication holds:
$$\mbox{Bishop's property}\,(\beta) )\Rightarrow\, \mbox{SVEP}.$$
Recall \cite{4} that $\lambda = \{\lambda_{1}; . . . ;
\lambda_{n}\} \in \mathbb{C}^{n}$ is said to be an eigenvalue of
$\mathbb{T}$ if there exists a non-zero vector $x\in H$ such that
$x \in \cap ker(T_{i} - \lambda_{i})$. We denote the set of all
eigenvalues of $\mathbb{T}$ by $\sigma_{p}(\mathbb{T})$ and
$$p_{00}(\mathbb{T}) = \sigma_{T}(\mathbb{T})\setminus \{\sigma_{T_{e}}(\mathbb{T})\cup
acc\sigma(\mathbb{T})\}$$ for he Riesz points of
$\sigma_{T}(\mathbb{T})$. Let $\mathcal{H} =
\overbrace{H_{1}\otimes . . .\otimes H_{n}}$ denote the completion
of $H_{1}\otimes . . .\otimes H_{n}$ with respect to some cross
norm, where $H_{i}\,(1 \leq i \leq n)$ are complex infinite
dimensional Hilbert spaces. Now we are ready to present our first
result.

\begin{theorem} Let $A_{i}; B_{i}\in B(H_{i})$ has Bishop's property $(\beta)$, let $T = (T_{1};
. . . ; T_{n})$ and $S = (S_{1}; . . . ; S_{n})$ be $n$-tuples of
operators $$T_{i} = I_{1}\otimes . . . \otimes I_{i-1}\otimes
A_{i}\otimes ...\otimes I_{n}$$ and $$S_{i} = I_{1}\otimes . . .
\otimes I_{i-1}\otimes B_{i}\otimes ...\otimes I_{n},$$
respectively. Assume that if $A_{i}$ has Bishop's property
($\beta$), then each $T_{i}$ has Bishop's property $\beta$. If $T
= (T_{1}; . . . ; T_{n})$ and $S = (S_{1}; . . . ; S_{n})$ are
quasisimilar $n$-tuples, then they have the same spectra,
essential spectra and indices, respectively.
\end{theorem}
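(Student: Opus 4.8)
The plan is to derive the three equalities from the rigidity that Bishop's property $(\beta)$ imposes under quasisimilarity. First I would check that both $n$-tuples $T$ and $S$ are commuting tuples possessing the several-variable property $(\beta)$. By hypothesis every $A_i$ and every $B_i$ has $(\beta)$, and the standing assumption then grants each component $T_i$ and $S_i$ the property $(\beta)$; since $T=(T_1,\dots,T_n)$ is precisely the tuple of coordinate ampliations $T_i=I_1\otimes\cdots\otimes A_i\otimes\cdots\otimes I_n$ on $\mathcal{H}$, property $(\beta)$ propagates from the factors to the tuple (the several-variable counterpart of the lemma above giving $(\beta)$ for such operators, together with stability of $(\beta)$ under tensoring), and likewise for $S$. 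Hence both $T$ and $S$ have SVEP and, being subdecomposable, have closed glocal spectral subspaces $\mathcal{H}_T(F)=\{x\in\mathcal{H}:\sigma_T(x)\subseteq F\}$ and $\mathcal{H}_S(F)$ for every closed $F\subseteq\mathbb{C}^n$.

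Next I would use quasisimilarity, which supplies quasiaffinities $X,Y\in B(\mathcal{H})$ (each injective with dense range) with $XT_i=S_iX$ and $YS_i=T_iY$ for all $i$. The intertwining yields the local-spectrum inclusion $\sigma_S(Xx)\subseteq\sigma_T(x)$ for every $x$, since applying $X$ to a $T$-solution of the local Koszul equations for $x$ produces an $S$-solution for $Xx$; thus $X\mathcal{H}_T(F)\subseteq\mathcal{H}_S(F)$ and symmetrically $Y\mathcal{H}_S(F)\subseteq\mathcal{H}_T(F)$. To get $\sigma_T(S)\subseteq\sigma_T(T)$, fix $\lambda\notin\sigma_T(T)$ and a closed polydisk $\overline{\Delta}$ about $\lambda$ disjoint from $\sigma_T(T)$; then $\mathcal{H}_T(\mathbb{C}^n\setminus\overline{\Delta})=\mathcal{H}$, so $X\mathcal{H}\subseteq\mathcal{H}_S(\mathbb{C}^n\setminus\overline{\Delta})$, and closedness together with density of $\operatorname{ran}X$ forces $\mathcal{H}_S(\mathbb{C}^n\setminus\overline{\Delta})=\mathcal{H}$. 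For a tuple with $(\beta)$ this identity entails $\sigma_T(S)\subseteq\mathbb{C}^n\setminus\overline{\Delta}$, so $\lambda\notin\sigma_T(S)$. The reverse inclusion follows from $Y$, giving $\sigma_T(T)=\sigma_T(S)$.

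For the essential spectrum I would pass to the Koszul complexes $K(T-\lambda;\mathcal{H})$ and $K(S-\lambda;\mathcal{H})$. The relations $XT_i=S_iX$ make $X$ a cochain map between them, so it induces maps on all Koszul homologies, with $Y$ inducing maps in the opposite direction. Off the Taylor spectrum these homologies are the stalks of the coherent analytic sheaf models of the two tuples, and since $X,Y$ are quasiaffinities, a homology at $\lambda$ is finite dimensional for $T$ exactly when it is for $S$; this gives $\sigma_{T_e}(T)=\sigma_{T_e}(S)$.

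The main obstacle is the equality of indices, because a quasiaffinity need not be an isomorphism on Koszul homology, so equal Euler characteristics are not immediate. I would work on a fixed connected component $\Omega$ of the common Fredholm domain $\mathbb{C}^n\setminus\sigma_{T_e}(T)$, where $\operatorname{ind}(T-\lambda)$ and $\operatorname{ind}(S-\lambda)$ are each locally constant and the homology stalks are finite dimensional. Using the factorizations $YX$ and $XY$ as quasiaffinities lying in the commutants of $T$ and of $S$ respectively, the crux is to show that the maps these induce on the finite-dimensional homology stalks are injective; a dimension count in each degree then yields $\dim H_p(T-\lambda)=\dim H_p(S-\lambda)$ for all $p$, so the alternating sums agree and $\operatorname{ind}(T-\lambda)=\operatorname{ind}(S-\lambda)$, which propagates across $\Omega$ by local constancy. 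Turning the global quasiaffinity into this stalkwise injectivity, via the commutant-to-sheaf correspondence for tuples with $(\beta)$, is where the real work of the proof lies, and it completes the argument.
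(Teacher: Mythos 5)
Your first step coincides with the paper's: once each $T_i$ and $S_i$ has property $(\beta)$, the tuples $T$ and $S$ inherit the several-variable Bishop property from Wolff's theorem on tensor product tuples (reference \cite{28} in the paper), and from there everything hinges on the rigidity of quasisimilarity for tuples with $(\beta)$. The paper stops here and simply invokes Putinar's theorem (\cite{21}, Theorem~1 and Corollary~1): quasisimilar commuting tuples with property $(\beta)$ have the same Taylor spectra, essential spectra and indices. What you have written after that point is an attempted reproof of Putinar's theorem, and it is not complete. The Taylor-spectrum part via glocal subspaces and the intertwinings $XT_i=S_iX$, $YS_i=T_iY$ is essentially sound (modulo the multivariable fact that $\mathcal{H}_S(F)=\mathcal{H}$ for closed $F$ forces $\sigma_T(S)\subseteq F$ when $S$ has $(\beta)$, which itself requires the sheaf model). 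But the essential-spectrum step asserts that finite-dimensionality of a Koszul homology stalk transfers across the chain maps induced by $X$ and $Y$, and a quasiaffinity induces on homology a map that need be neither injective nor of dense range, so this assertion is exactly what has to be proved, not a consequence of $X,Y$ being quasiaffinities.

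The index step is where you explicitly concede the argument: you state that the crux is the injectivity of the maps induced by $YX$ and $XY$ on the finite-dimensional homology stalks and that establishing this ``is where the real work of the proof lies.'' That is a genuine gap, and it is precisely the hard content of Putinar's theorem; his proof does not proceed by a stalkwise dimension count in each degree (which would give the much stronger conclusion that all Betti numbers of the Koszul complexes agree), but through the quasi-coherent sheaf models $\mathcal{F}_T$, $\mathcal{F}_S$ attached to tuples with $(\beta)$ and a comparison of their local cohomologies. As written, your argument proves the spectrum equality, sketches the essential-spectrum equality with an unjustified transfer claim, and leaves the index equality as a declared open step. The shortest repair is to do what the paper does: after establishing $(\beta)$ for both tuples via Wolff, cite Putinar's quasisimilarity theorem rather than reprove it.
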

\begin{proof}
By the fact that if $A_{i}$ has Bishop's property ($\beta$), then
each $T_{i}$ has Bishop's property ($\beta$). It follows from [28,
Corollary 2.2] that the $n$-tuple $T = (T_{1}; . . . ; T_{n}$) has
also Bishop's property ($\beta$). By the same arguments we prove
that $S = (S_{1}; . .. ; S_{n})$ has Bishop's property ($\beta$),
too. Recall \cite[Theorem 1; Corollary 1]{21} that if $T$ and $S$
are quasisimilar commuting $n$-tuples having Bishop's property
($\beta$), then these $n$-tuples have the same spectra, essential
spectra and indices, respectively. This completes the proof.
\end{proof}
\begin{lemma} Let $A_{i}\in B(H_{i})$ be $(p; k)$-quasihyponormal operators. Then each
$T_{i}$ has Bishop's condition ($\beta$).
\end{lemma}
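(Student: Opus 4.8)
The plan is to reduce the assertion to the single-operator case already settled in the excerpt: I would first show that each $T_{i}$ is itself a $(p;k)$-quasihyponormal operator on $\mathcal{H}$, and then invoke Lemma~2.4, which guarantees that every $(p;k)$-quasihyponormal operator enjoys Bishop's property $(\beta)$. Since the conclusion we want is precisely Bishop's property for $T_{i}$, this two-move strategy suffices, and it also furnishes exactly the hypothesis required by Theorem~2.5.

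To carry this out, the first step is to note that the identity operator $I_{j}\in B(H_{j})$ is $(p;k)$-quasihyponormal for every $j$: being normal it satisfies $|I_{j}|^{2p}-|I_{j}^{*}|^{2p}=I_{j}-I_{j}=0$, so that $I_{j}^{*k}\bigl(|I_{j}|^{2p}-|I_{j}^{*}|^{2p}\bigr)I_{j}^{k}=0\geq 0$. Hence every tensor factor appearing in $T_{i}=I_{1}\otimes\cdots\otimes I_{i-1}\otimes A_{i}\otimes I_{i+1}\otimes\cdots\otimes I_{n}$ is $(p;k)$-quasihyponormal, the factor $A_{i}$ by hypothesis and the remaining identity factors by the preceding remark. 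The second step is to deduce from this that $T_{i}$ itself is $(p;k)$-quasihyponormal. For this I would argue by induction on the number of tensor factors, using associativity of the completed tensor product to write $X_{1}\otimes\cdots\otimes X_{m}=(X_{1}\otimes\cdots\otimes X_{m-1})\otimes X_{m}$ and then applying the two-factor criterion of Lemma~2.2, which asserts that $X\otimes Y$ is $(p;k)$-quasihyponormal exactly when both $X$ and $Y$ are: the inductive hypothesis makes $X_{1}\otimes\cdots\otimes X_{m-1}$ a $(p;k)$-quasihyponormal operator, and Lemma~2.2 then absorbs the last factor. Applying this to $T_{i}$ shows that $T_{i}$ is $(p;k)$-quasihyponormal, whereupon Lemma~2.4 delivers Bishop's property $(\beta)$.

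I expect the only point requiring genuine care to be this inductive extension of the two-factor criterion to an arbitrary number of tensor factors, since Lemma~2.2 is stated only for a single tensor product $X\otimes Y$. It rests on associativity of the completed tensor product and on the fact, recorded above, that a tensor product of identities is normal and hence $(p;k)$-quasihyponormal; with these in place the induction is routine, and the remaining steps are immediate invocations of results already available in the excerpt. Everything above is uniform in $i$, so the argument simultaneously establishes Bishop's property $(\beta)$ for each $T_{i}$, as claimed.
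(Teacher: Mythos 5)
Your proposal is correct and follows essentially the same route as the paper: reduce to the single-operator case by showing each $T_{i}$ is $(p;k)$-quasihyponormal via Lemma~2.2 and a finite induction over the tensor factors, then apply Lemma~2.4. Your write-up is in fact more careful than the paper's, since you explicitly verify that the identity factors are $(p;k)$-quasihyponormal (being normal) so that the two-factor criterion of Lemma~2.2 genuinely applies at each step of the induction.
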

\begin{proof}
Since $A_{i}$ are $(p; k)$-quasihyponormal operators, by applying
Lemma 2.2 it follows from a finite induction argument that $A_{i}$
are $(p; k)$- quasihyponormal operators if and only if $T_{i}$ are
$(p; k)$-quasihyponormal operators. Thus the fact that $A_{i}$
have Bishop's property ($\beta$), by Lemma 2.4. This implies that
each $T_{i}$ has Bishop's condition ($\beta$).
\end{proof}

\begin{corollary} Let $A_{i}; B_{i}\in B(H_{i})$ be $(p; k)$-quasihyponormals, let $\mathbb{T} = (T_{1};
. . . ; T_{n})$ and $\mathbb{S} = (S_{1}; . . . ; Sn)$ be
$n$-tuples of operators $T_{i} = I_{1}\otimes . . .\otimes
I_{i-1}\otimes A_{i} . . . I_{n}$ and $S_{i} = I_{1}\otimes . .
.\otimes I_{i-1}\otimes B_{i} . . . I_{n}$, respectively. If
$\mathbb{T} = (T_{1}; . . . ; T_{n})$ and $S = (S_{1}; . . . ;
S_{n})$ are quasisimilar $n$-tuples, then they have the same
spectra, essential spectra and indices, respectively.
\end{corollary}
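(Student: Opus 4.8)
The plan is to derive this statement as a direct specialization of Theorem 2.5, so that the entire task reduces to verifying that the hypotheses of that theorem are met once the building blocks $A_i, B_i$ are $(p;k)$-quasihyponormal. First I would invoke Lemma 2.4: since each $A_i$ and each $B_i$ is $(p;k)$-quasihyponormal, each of them has Bishop's property $(\beta)$ (and hence the single valued extension property). This supplies the first hypothesis of Theorem 2.5, namely that the coordinate factors possess property $(\beta)$.

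Next I would address the passage from the individual factors to the tensor-product coordinates $T_i$ and $S_i$. This is precisely the content of Lemma 2.6: because $A_i$ is $(p;k)$-quasihyponormal, a finite induction on the tensor slots combined with Lemma 2.2 shows that $T_i$ is again $(p;k)$-quasihyponormal, whence Lemma 2.4 yields that each $T_i$ has Bishop's property $(\beta)$. The same reasoning applies verbatim to each $S_i$. This verifies the conditional hypothesis of Theorem 2.5, namely that property $(\beta)$ of $A_i$ forces property $(\beta)$ of $T_i$.

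With both hypotheses confirmed, the conclusion is immediate: Theorem 2.5 applies to the quasisimilar $n$-tuples $\mathbb{T} = (T_1, \ldots, T_n)$ and $\mathbb{S} = (S_1, \ldots, S_n)$ and gives that they share the same spectra, essential spectra, and indices. I expect no genuine obstacle in the final step itself; the only delicate point, already absorbed into the cited results, is the lifting of property $(\beta)$ from the single coordinate operators $T_i$ to the full commuting tuple $\mathbb{T}$, which rests on [28, Corollary 2.2] as used inside the proof of Theorem 2.5. Once that lifting is granted, quasisimilarity of tuples possessing property $(\beta)$ forces equality of all three spectral invariants via \cite[Theorem 1; Corollary 1]{21}, and the corollary follows.
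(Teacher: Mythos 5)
Your proposal is correct and follows exactly the paper's own route: the paper's proof of this corollary is the one-line "It suffices to apply Lemma 2.6 and Theorem 2.5," and you have simply unpacked that citation chain (Lemma 2.4 for property $(\beta)$ of the factors, Lemma 2.2 and Lemma 2.6 for the tensor coordinates, then Theorem 2.5). No difference in approach and no gap.
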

\begin{proof} It suffices to apply Lemma 2.6 and Theorem 2.5.
\end{proof}
\section{Compact Perturbations}
We will define an interesting class of operators which contain the
classes of hyponormal and $p$-hyponrmal operators. An operator$
A\in B(H)$ is said to be paranormal if $||Ax||^{2} \leq
||A^{2}x||||x||$; for all $x\in H$. We have $$\mbox{hyponormal}
\subset p-\mbox{hyponormal} \subset
\mbox{paranormal}\subset\mbox{normaloid}.$$ A is said to be
$\log$-hyponormal if $A$ is invertible and satisfies the following
equality $\log(A^{*}A)\geq log(AA^{*})$: It is known that
invertible $p$-hyponormal operators are $\log$-hyponormal
operators but the converse is not true \cite{23}. However it is
very interesting that we may regards $\log$-hyponormal operator
are 0-hyponormal operator \cite{23}. The idea of $\log$-hyponormal
operator is due to Ando \cite{1}. For properties of
$\log$-hyponormal operators (see \cite{2, 23}). We say that an
operator $A\in B(H)$ belongs to the class $A$ if $|A^{2}| \geq
|A|^{2}$. Class A was first introduced by Furuta-Ito-Yamazaki
\cite{9} as a subclass of paranormal operators which includes the
classes of $p$-hyponormal and $\log$-hyponormal operators. The
following theorem is one of the results associated with class $A$.

\begin{theorem} \cite{9}

(1) Every $\log$-hyponormal operator is a class $A$ operator.

(2) Every class $A$ operator is a paranormal operator.

\end{theorem}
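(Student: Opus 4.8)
The plan is to treat the two implications separately: I expect (2) to follow from an elementary Cauchy--Schwarz argument, while (1) should reduce to a Furuta-type operator inequality for the chaotic (logarithmic) order, and this is where I anticipate the real work.

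For (2), I would start from the class $A$ inequality $|A^{2}|\ge |A|^{2}$ and unwind paranormality through inner products. Since $\|Ax\|^{2}=\langle A^{*}Ax,x\rangle=\langle |A|^{2}x,x\rangle$, the class $A$ hypothesis gives $\langle |A|^{2}x,x\rangle\le\langle |A^{2}|x,x\rangle$, because $T\mapsto\langle Tx,x\rangle$ is monotone on positive operators. Next I would apply Cauchy--Schwarz to the positive operator $|A^{2}|$ to get $\langle |A^{2}|x,x\rangle\le\||A^{2}|x\|\,\|x\|$, and finally observe that $\||A^{2}|x\|=\|A^{2}x\|$, since $\||A^{2}|x\|^{2}=\langle |A^{2}|^{2}x,x\rangle=\langle A^{*2}A^{2}x,x\rangle=\|A^{2}x\|^{2}$. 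Chaining these inequalities yields $\|Ax\|^{2}\le\|A^{2}x\|\,\|x\|$, which is exactly paranormality. This part I expect to be routine.

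For (1), I would first invoke the polar decomposition $A=U|A|$, where $U$ is unitary because $A$ is invertible. Writing $P=|A|^{2}=A^{*}A$, one has $AA^{*}=UPU^{*}$, so the $\log$-hyponormal hypothesis $\log(A^{*}A)\ge\log(AA^{*})$ reads $\log P\ge U(\log P)U^{*}$. Conjugating this by $U^{*}$ turns it into $\log(U^{*}PU)\ge\log P$; setting $P_{1}=U^{*}PU$, this says precisely that $P_{1}$ dominates $P$ in the chaotic order. On the other side, the class $A$ conclusion can be rewritten explicitly: since $A^{*2}A^{2}=A^{*}(A^{*}A)A=A^{*}PA=P^{1/2}(U^{*}PU)P^{1/2}=P^{1/2}P_{1}P^{1/2}$, the desired inequality $|A^{2}|\ge|A|^{2}$ becomes $(P^{1/2}P_{1}P^{1/2})^{1/2}\ge P$.

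The remaining and crucial step, which I expect to be the main obstacle, is to deduce this last inequality from $\log P_{1}\ge\log P$. Here I would appeal to the logarithmic (chaotic order) version of the Furuta inequality of Ando and Fujii--Furuta--Kamei, which states that for positive invertible operators with $\log X\ge\log Y$ one has $(Y^{r/2}X^{p}Y^{r/2})^{r/(p+r)}\ge Y^{r}$ for all $p,r>0$. Taking $X=P_{1}$, $Y=P$ and specializing to $p=r=1$ gives exactly $(P^{1/2}P_{1}P^{1/2})^{1/2}\ge P$, completing (1). The delicate points to get right are the direction of the chaotic order after conjugation by $U^{*}$ and the correct choice of exponents in the Furuta inequality; a commuting-operator sanity check (where $\log P_{1}\ge\log P$ collapses to $P_{1}\ge P$) confirms both the direction and the exponents.
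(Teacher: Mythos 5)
The paper offers no proof of this theorem at all: it is stated purely as a citation of Furuta--Ito--Yamazaki \cite{9}, so there is nothing in the text to compare your argument against except the original source. Your proof is correct and is essentially the standard one. Part (2) is exactly the usual chain $\|Ax\|^{2}=\langle |A|^{2}x,x\rangle\le\langle |A^{2}|x,x\rangle\le\||A^{2}|x\|\,\|x\|=\|A^{2}x\|\,\|x\|$, with each step justified as you say. For part (1), your reduction is also right: with $A=U|A|$, $U$ unitary by invertibility, $P=|A|^{2}$ and $P_{1}=U^{*}PU$, the hypothesis $\log(A^{*}A)\ge\log(AA^{*})$ does conjugate to the chaotic-order relation $\log P_{1}\ge\log P$, and the class $A$ conclusion is equivalent to $(P^{1/2}P_{1}P^{1/2})^{1/2}\ge P$, which is the $p=r=1$ case of the Fujii--Furuta--Kamei/Ando characterization of chaotic order. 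The only caveat is that this last ingredient is itself a nontrivial theorem that you invoke rather than prove, but that is entirely reasonable (and is how the result is proved in \cite{9} as well); your sanity check on the direction of the order and the exponents is sound.
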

By the same argument used in the proof of Lemma 2.2, we can
prove the following lemma.

\begin{lemma} Let $A;B\in B(H)$. Then $A, B$ are class $A$ operators if and
only $A\otimes B$ is a class $A$ operator, too. Let $A_{i}\in
B(H_{i})$ and let $D(A_{i})$ be the set of operators $A_{i}$
satisfying the following properties

1) Every operator in $D(A_{i})$ is normaloid.

2) The restriction of $A_{i}\in D(Ai)$ to an invariant subspace
$M$ remains in $D(A_{i})$.

3) If $A_{i}|_{M}$ are normal operators, then $M$ is a reducing
subspace for $A_{i}$.
\end{lemma}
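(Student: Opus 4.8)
The genuine assertion here is the tensor characterization for class $A$ operators, namely that (for nonzero $A,B$) both $A$ and $B$ are class $A$ if and only if $A\otimes B$ is; the clause introducing $D(A_i)$ merely records three defining properties for later use, so it requires no proof. The plan is to reduce the class $A$ condition for $A\otimes B$ to a single operator inequality on the tensor product space. Using $(A\otimes B)^{*}(A\otimes B)=(A^{*}A)\otimes(B^{*}B)$ and $(A\otimes B)^{2}=A^{2}\otimes B^{2}$, together with $(P\otimes Q)^{1/2}=P^{1/2}\otimes Q^{1/2}$ for positive $P,Q$, I would record the identities $|A\otimes B|^{2}=|A|^{2}\otimes|B|^{2}$ and $|(A\otimes B)^{2}|=|A^{2}|\otimes|B^{2}|$. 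Thus $A\otimes B$ is class $A$ exactly when
$$|A^{2}|\otimes|B^{2}|\ \geq\ |A|^{2}\otimes|B|^{2}.$$

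For the easy direction I would insert the cross term $|A|^{2}\otimes|B^{2}|$ and write
$$|A^{2}|\otimes|B^{2}|-|A|^{2}\otimes|B|^{2}=\bigl(|A^{2}|-|A|^{2}\bigr)\otimes|B^{2}|+|A|^{2}\otimes\bigl(|B^{2}|-|B|^{2}\bigr).$$
If $A$ and $B$ are class $A$, each summand is a tensor product of two positive operators, hence positive, and the displayed inequality follows; this is the ``same argument as in Lemma 2.2''.

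The converse is the real work, and the obstacle is that the inequality couples $A$ and $B$. I would first test the operator inequality on decomposable vectors $u\otimes g$, obtaining the scalar inequality
$$\langle|A^{2}|u,u\rangle\,\langle|B^{2}|g,g\rangle\ \geq\ \langle|A|^{2}u,u\rangle\,\langle|B|^{2}g,g\rangle$$
for all unit $u\in H_{1}$, $g\in H_{2}$. Arguing by contradiction, suppose $A$ is not class $A$, so there is a unit $u_{0}$ with $\langle|A^{2}|u_{0},u_{0}\rangle<\langle|A|^{2}u_{0},u_{0}\rangle$; the right side is then strictly positive. In the generic case $\langle|A^{2}|u_{0},u_{0}\rangle>0$, dividing by it yields $|B^{2}|\geq c\,|B|^{2}$ with $c=\langle|A|^{2}u_{0},u_{0}\rangle/\langle|A^{2}|u_{0},u_{0}\rangle>1$. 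The key step is to refute this: on any unit $g$ one has $\|B\|^{2}\geq\langle|B^{2}|g,g\rangle\geq c\,\|Bg\|^{2}$, and letting $\|Bg\|^{2}\to\|B\|^{2}$ along a near-maximizing sequence forces $c\leq 1$ once $B\neq 0$, a contradiction. Here I use the elementary norm facts $\||B^{2}|\|=\|B^{2}\|\leq\|B\|^{2}=\||B|^{2}\|$. In the degenerate case $\langle|A^{2}|u_{0},u_{0}\rangle=0$, the scalar inequality forces $\langle|B|^{2}g,g\rangle=0$ for every $g$, i.e.\ $B=0$, again excluded. Hence $A$ is class $A$, and by the symmetric argument (using $A\neq 0$) so is $B$. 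The main difficulty is precisely decoupling the two operators, which the norm inequality $\|B^{2}\|\leq\|B\|^{2}$ resolves by forbidding $|B^{2}|\geq c\,|B|^{2}$ for any $c>1$.
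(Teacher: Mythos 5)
Your proof is correct, and it is considerably more than the paper offers: the paper's entire justification is the sentence ``by the same argument used in the proof of Lemma 2.2,'' and Lemma 2.2 is itself quoted from \cite{12} without proof, so no argument actually appears in the text. Your reduction via $|A\otimes B|^{2}=|A|^{2}\otimes|B|^{2}$ and $|(A\otimes B)^{2}|=|A^{2}|\otimes|B^{2}|$ is the right normal form; the forward direction by inserting the cross term $|A|^{2}\otimes|B^{2}|$ is exactly how such tensor statements are proved, and your converse is the standard Stochel-type decoupling argument (test on decomposable vectors, extract $|B^{2}|\geq c\,|B|^{2}$ with $c>1$, and refute it with $\||B^{2}|\|=\|B^{2}\|\leq\|B\|^{2}$), which is the argument used in the literature for class $A$ tensor products. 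Two remarks. First, you are right to flag that the equivalence fails as literally stated unless $A$ and $B$ are both nonzero (if $B=0$ then $A\otimes B=0$ is class $A$ regardless of $A$); the paper omits this hypothesis, as it also does in Lemma 2.2, and your proof correctly isolates where nonvanishing is used. Second, you are also right that the clause introducing $D(A_{i})$ is a definition mistakenly folded into the lemma environment and carries no proof obligation. The only cosmetic caveat is that your converse only needs the scalar inequality for the single vector $u_{0}$ paired with all $g$, so the passage through the full operator inequality $|B^{2}|\geq c|B|^{2}$ could be stated directly at the level of quadratic forms; this changes nothing in substance.
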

Now we are ready to give a positive answer to \cite[Problem 3]{5}.
\begin{theorem} Let $A_{i} \in D(A_{i})$ and let $\mathbb{T} = (T_{}; . . . ; T_{n})$ be
$n$-tuple of operators $T_{i} = I_{1}\otimes . . . \otimes
I_{i-1}\otimes A_{i}\otimes . . .\otimes I_{n}$ on $H_{i}$. If
$\mathbb{T}$ is Weyl but not invertible, then there exists an
invertible commuting $n$-tuple $\mathbb{S} = (S_{1}; . . . ;
S_{n})$ such that $\mathbb{T} = \mathbb{S} + \mathbb{F}$ for some
$n$-tuple of compact operators $F_{i} (i = 1; ... ; n)$ such that
$\mathbb{F}= (F_{1}, ..., F_{n})$.
\end{theorem}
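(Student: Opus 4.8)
The plan is to localise the whole statement at the origin and reduce the several--variable assertion to the isolated--point structure of the coordinate operators $A_i$, after which a single finite--rank modification carried on a joint spectral subspace will do all the work. Since the tuple $\mathbb{S}$ is called invertible exactly when its Koszul complex is exact, i.e.\ when $0=(0,\dots ,0)\notin\sigma_{T}(\mathbb{S})$, the relevant point throughout is the origin.

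First I would translate the hypotheses into spectral data. Each $T_i=I_1\otimes\cdots\otimes A_i\otimes\cdots\otimes I_n$ is built from a single tensor factor, so the Koszul complex of $\mathbb{T}$ factors as the tensor product of the one--variable Koszul complexes of the $A_i$; by the K\"unneth principle this gives the spectral mapping $\sigma_{T}(\mathbb{T})=\sigma(A_1)\times\cdots\times\sigma(A_n)$ together with the multiplicativity of the index, $\operatorname{ind}(\mathbb{T})=\prod_{i=1}^{n}\operatorname{ind}(A_i)$. Because $\mathbb{T}$ is not invertible we obtain $0\in\sigma(A_i)$ for every $i$; because $\mathbb{T}$ is Fredholm each $A_i$ is Fredholm at $0$; and because $\mathbb{T}$ is Weyl the product of the indices vanishes, so the origin is a Weyl point for $\mathbb{T}$.

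The decisive step, and the one I expect to be the main obstacle, is to upgrade ``$0$ is a Fredholm point of index zero'' to ``$(0,\dots ,0)$ is an isolated point of $\sigma_{T}(\mathbb{T})$ carrying a finite--rank joint Riesz idempotent.'' Here the defining properties of $D(A_i)$ enter: normaloidness together with the restriction and reducing properties $(1)$--$(3)$ of Lemma 3.2 are precisely the conditions under which an operator satisfies Weyl's theorem, so that a Weyl point lying in the spectrum must be an isolated eigenvalue of finite multiplicity. Applied to a coordinate operator, this forces $0$ to be an isolated point of $\sigma(A_i)$ with finite--dimensional spectral subspace on which $A_i$ is normaloid and quasinilpotent, hence zero. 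Taking products, $(0,\dots ,0)$ becomes an isolated point of $\sigma_{T}(\mathbb{T})$, and since it is also a Fredholm point the Taylor functional calculus yields an idempotent $P$ commuting with every $T_i$ whose range $M=\operatorname{ran}P$ is finite dimensional. The delicate issue I would watch is that the one--variable Weyl's theorem only supplies isolatedness at \emph{index--zero} points, whereas multiplicativity of the index only guarantees that \emph{some} factor has index zero; so the tensor structure and the class $D$ hypotheses must be used in tandem, not merely coordinate by coordinate, to ensure the origin is a Weyl point of each $A_i$ and hence a genuine Riesz point of the joint spectrum.

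Finally, with $P$ in hand I would close by a finite--rank surgery. Write $\mathcal{H}=M\oplus N$ with $N=\ker P$; since $P$ commutes with each $T_i$ this decomposition reduces the whole tuple, $\mathbb{T}=\mathbb{T}|_{M}\oplus\mathbb{T}|_{N}$, with $\sigma_{T}(\mathbb{T}|_{M})=\{0\}$ and $\sigma_{T}(\mathbb{T}|_{N})=\sigma_{T}(\mathbb{T})\setminus\{0\}$, so that $\mathbb{T}|_{N}$ is already invertible. I then define $\mathbb{S}=(S_1,\dots ,S_n)$ by keeping $S_i=T_i$ on $N$ and setting $S_i=I_{M}$ (any fixed nonzero scalar) on $M$. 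This tuple is commuting, being block diagonal along a reducing decomposition with the original commuting family on $N$ and scalars on $M$, and it is invertible because $\sigma_{T}(\mathbb{S})=\{(1,\dots ,1)\}\cup(\sigma_{T}(\mathbb{T})\setminus\{0\})$ avoids the origin. Each $F_i=T_i-S_i$ vanishes on $N$ and acts on the finite--dimensional space $M$, hence is of finite rank and in particular compact, giving $\mathbb{T}=\mathbb{S}+\mathbb{F}$ with $\mathbb{F}=(F_1,\dots ,F_n)$ compact, as required.
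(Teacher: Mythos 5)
Your overall architecture coincides with the paper's: establish that the origin is a Riesz point of $\sigma_{T}(\mathbb{T})$, take the associated finite--rank Taylor idempotent $P$, and repair the tuple by a finite--rank perturbation supported on $\operatorname{ran}P$. Your closing surgery (keeping $S_i=T_i$ on $N=\ker P$ and setting $S_i=I$ on $M=\operatorname{ran}P$) is correct and in fact slightly cleaner than the paper's choice $F_i=P_0$, since you never need to argue, as the paper does via property (3) of $D(A_i)$, that the idempotent is an orthogonal projection. The real divergence is in the first step: the paper obtains $0\in p_{00}(\mathbb{T})$ in one stroke by citing Putinar's several--variable Weyl theorem \cite{20}, whereas you try to manufacture it coordinate by coordinate from the one--variable Weyl theorem for each $A_i$ together with $\sigma_{T}(\mathbb{T})=\prod_i\sigma(A_i)$ and $\operatorname{ind}(\mathbb{T})=\prod_i\operatorname{ind}(A_i)$.

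That first step is where your proof has a genuine, unfilled gap --- and you flag it yourself without closing it. The hypotheses give $0\in\sigma(A_i)\setminus\sigma_e(A_i)$ for every $i$ but only $\prod_i\operatorname{ind}(A_i)=0$, so only \emph{one} factor is forced to have index zero at the origin; for the remaining factors $0$ may be a Fredholm point of nonzero index, hence not isolated in $\sigma(A_i)$, hence $(0,\dots,0)$ is not isolated in $\prod_i\sigma(A_i)$ and no finite--rank Riesz idempotent exists. This is not a removable technicality: take $A_1=U$ the unilateral shift (hyponormal, hence class $A$, hence in $D(A_1)$, with $0$ an interior point of $\sigma(U)$ and $\operatorname{ind}(U)=-1$) and $A_2=0\oplus I$ (normal, Weyl at $0$). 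The resulting pair is Weyl and not invertible, yet $(0,0)$ is a limit point of $\sigma_{T}(\mathbb{T})=\overline{\mathbb{D}}\times\{0,1\}$, so your ``decisive step'' fails and the construction collapses. Saying that the tensor structure and the $D$--hypotheses ``must be used in tandem'' names the obstacle without overcoming it; as written, your argument proves the theorem only under the additional assumption that every $A_i$ is Weyl at $0$, i.e.\ that $0$ is a Riesz point of each $\sigma(A_i)$. (The same difficulty is hidden inside the paper's appeal to \cite{20}, so you have correctly located where all the content of the theorem lies --- but locating the gap is not the same as closing it.)
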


\begin{proof} Since $\mathbb{T}$ is Weyl but not invertible, \cite[Theorem 1]{20}, implies
that $0 \in p_{00}(\mathbb{T})$. Let $f$ be the characteristic of
$0 \in iso\sigma_{T} (\mathbb{T})$. Since $f$ is analytic in a
neighborhood of $\sigma_{T}(\mathbb{T})$, \cite[Theorem 4.8;
Corollary 4.9]{25}, implies the existence of an idempotent $P_{0}
= f(\mathbb{T})\in B(H)$ such that $P_{0}T_{i} = T_{i}P_{0}$,
where $T_{i}$ is quasinilpotent on $ran P_{0}$ and $$ 0\notin
\sigma_{T} (\mathbb{T}_{ker P_{0}})$$. Since $A_{i}\in D(A_{i})$,
it follows that $A_{i}$ are normaloid and the restriction of each
$A_{i}$ to an invariant subspace remains in $D(A_{}$. Thus
$$T_{i}|_{ran P_{0}} = 0.$$ Since $T_{i}|_{ran P_{0}}=0$ and
$A_{i}\in D(A_{i})$, we have
$$\ker P_{0} = (ran P_{0})^{\perp},$$ that is, $P_{0}$ is an orthogonal
projection, and $$T_{i} = 0 \oplus T^{'}_{i} \mbox{on}\, H = ran
P_{0} \oplus ran P^{\perp}_{0}.$$ Since $0\in p_{00}(T)$, the
subspace $ran P_{0}$ is finite dimensional. Thus $P_{0}$ is a
compact operator on $H$. If we let $$F = (P_{i}; . . . ; P_{n})$$
and $$\mathbb{S} = \mathbb{T}-\mathbb{F} = (T_{1}-P_{0}; . . . ;
T_{n}-P_{0}),$$ then $S$ is a commuting $n$-tuples. Hence [4,
p.39], implies that $$\sigma_{T} (\mathbb{S}) = \sigma_{T}
((\mathbb{T} - \mathbb{F})|_{ran P_{0}})\cup \sigma_{T}
((\mathbb{T}- \mathbb{F})|_{ran P^{\perp}_{0}}).$$ Now it is clear
that $$0\notin \sigma_{T}((\mathbb{T}- \mathbb{F})|_{ran
P_{0}}).$$ This by (1) implies that $$0 \notin
\sigma_{T}((\mathbb{T} - \mathbb{F})|_{ran P^{\perp}_{0}}) =
\sigma_{T} (\mathbb{T}|_{\ker P_{0}}).$$ Hence $$0\notin
\sigma_{T} (\mathbb{S}),$$ that is, $\mathbb{S} =
\mathbb{T}-\mathbb{F}$ is invertible and $\mathbb{T} = \mathbb{S}
+\mathbb{F}$. This completes the proof.
\end{proof}
As application of the previous theorem we present the following
corollary. \begin{corollary} Let $A_{i}$ be class $A$ operators
and let $\mathbb{T} = (T_{1}; . . . ; T_{n})$ be $n$-tuple of
operators $$T_{i} = I_{1}\otimes . . .\otimes I_{i-1}\otimes
A_{i}\otimes . . .\otimes I_{n} \mbox{on}\, H.$$ If $\mathbb{T}$
is Weyl but not invertible, then there exists an invertible
commuting $n$-tuple $\mathbb{S} = (S_{1}; . . . ; S_{n})$ such
that $\mathbb{T} = \mathbb{S} + \mathbb{F}$ for some $n$-tuple of
compact operators $F_{i} (i = 1; . . . ; n)$, $\mathbb{F} =
(F_{1}; ...; F_{n})$.
\end{corollary}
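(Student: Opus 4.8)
The plan is to deduce this corollary directly from Theorem 3.3 by checking that every class $A$ operator belongs to the abstract class $D(A_i)$; once that inclusion is established, the statement is simply the specialization of Theorem 3.3 to the tensor-product $n$-tuple $\mathbb{T}$ built from class $A$ factors $A_i$. So the whole task reduces to verifying the three defining properties (1)--(3) of $D(A_i)$ for an arbitrary class $A$ operator $A$.

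Property (1) is immediate: by Theorem 3.1(2) every class $A$ operator is paranormal, and by the inclusion chain recorded in the introduction every paranormal operator is normaloid, so $A$ is normaloid. For property (2) I would show that the class $A$ inequality $|A^2|\geq|A|^2$ is inherited by restrictions. Writing $A=\begin{pmatrix} B & C\\ 0 & D\end{pmatrix}$ on $M\oplus M^{\perp}$ with $B=A|_M$, a block computation gives $P_M|A|^2P_M=B^{*}B=|B|^2$ and $P_M(A^{*2}A^2)P_M=B^{*2}B^2$. Since $t\mapsto t^{1/2}$ is operator concave and vanishes at $0$, Hansen's inequality gives $(P_M\,A^{*2}A^2\,P_M)^{1/2}\geq P_M\,(A^{*2}A^2)^{1/2}\,P_M=P_M|A^2|P_M$. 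Combining this with $|A^2|\geq|A|^2$ yields $|B^2|=(B^{*2}B^2)^{1/2}\geq P_M|A^2|P_M\geq P_M|A|^2P_M=|B|^2$, so $B=A|_M$ is again class $A$, which is exactly property (2).

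Property (3) is the delicate one. Here I would assume $B=A|_M$ is normal and show $C=0$, so that $M$ reduces $A$. For normal $B$ one has $|B^2|=|B|^2$, which collapses the inequality chain of the previous paragraph into the equality $P_M|A^2|P_M=|B|^2=(B^{*2}B^2)^{1/2}=(P_M\,A^{*2}A^2\,P_M)^{1/2}$. Tracing the equality case of Hansen's inequality, together with $P_M(|A^2|-|A|^2)P_M=0$ and $|A^2|-|A|^2\geq0$, should force the off-diagonal interaction between $M$ and $M^{\perp}$ to vanish and hence $C=0$; this is the known phenomenon that the normal part sitting inside a class $A$ (indeed paranormal) operator is automatically reducing.

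The main obstacle is precisely property (3): extracting the rigidity $C=0$ from the equality case, whereas (1) is trivial and (2) is the routine Hansen-inequality computation above. With properties (1)--(3) verified, every class $A$ operator lies in $D(A_i)$, and applying Theorem 3.3 produces the invertible commuting $n$-tuple $\mathbb{S}=(S_1;\ldots;S_n)$ and the $n$-tuple of compact operators $\mathbb{F}=(F_1;\ldots;F_n)$ with $\mathbb{T}=\mathbb{S}+\mathbb{F}$, completing the proof.
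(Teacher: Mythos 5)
Your overall route is exactly the paper's: show that every class $A$ operator lies in $D(A_i)$ and then invoke Theorem 3.3. The paper disposes of the inclusion in one line by citing Uchiyama's paper on Weyl's theorem for class $A$ operators (reference [27]), whereas you try to verify the three defining properties of $D(A_i)$ by hand. Your verifications of (1) (class $A\subset$ paranormal $\subset$ normaloid) and (2) (the Hansen-inequality computation showing $|B^2|\ge P_M|A^2|P_M\ge P_M|A|^2P_M=|B|^2$ for $B=A|_M$) are correct and standard.

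The gap is property (3), and you have correctly identified it as the crux but not actually proved it; ``tracing the equality case of Hansen's inequality \ldots should force $C=0$'' is a hope, not an argument. Worse, in the only instance where Theorem 3.3 actually uses property (3), the restriction $A_i|_{\operatorname{ran}P_0}$ is quasinilpotent and normaloid, hence equal to $0$ --- and for $B=0$ your inequality chain degenerates completely: it reads $0=|B^2|\ge P_M|A^2|P_M\ge P_M|A|^2P_M=|B|^2=0$, which only recovers $AP_M=0$ (already known) and says nothing whatsoever about the off-diagonal block $C$. So the equality-case-of-Hansen heuristic cannot deliver $C=0$ in precisely the case that matters; the reducing property of normal (in particular zero) restrictions of class $A$ operators requires a genuinely different argument, which is the content of the cited result [27]. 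Either supply that argument or, as the paper does, cite it; as written, your proof of the inclusion class $A\subset D(A_i)$ is incomplete at its essential point.
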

\begin{proof} It is known \cite{27} that class $A$ operators are in $D(A_{i})$. Hence
it suffices to apply the previous theorem.
\end{proof}
\begin{remark} In Theorem 3.3. if $A_{i}$ was injective $\mathbb{T}$ must be
invertible. Indeed, by known results of Curto, Eschmeier and
Fialkow, the spectrum of the tensor product tuple $\mathbb{T}$ is
the cartesian product of the spectra of the operators $A_{i}$. The
essential spectrum of $\mathbb{T}$ consists of all points $z$ in
$\sigma(\mathbb{T})$ with $z_{i}\in  \sigma_{e}(A_{i})$ for at
least one index $i\in\{ 1; ...; n\}$. Since $$ind(\mathbb{T}) =
\prod ind(A_{i}),$$ the hypotheses of Theorem 3.2 imply that at
least one of the operators $A_{i}$ is a one to one Fredholm
operator with index zero. This forces the operator $A_{i}$ and
hence also the tuple $\mathbb{T}$ to be invertible.
\end{remark}
\textbf{Competing interests}. The authors declare that they have no competing interests.\\
\textbf{Availability of data and materials}
Data sharing not applicable to this paper as no data sets were generated or analyzed
during the current study.

{}

\begin{thebibliography}{99}
\bibitem{1} T.Ando, operators with a norm condition, Acta. Sci.
Math(Szeged), 33(1972), 169-178.

\bibitem{2} Aluthge, On p-hyponormal operators for 0 < p < 1, Integr.
Equat. Oper. Theor, 13(1990), 307-315.

\bibitem{3} S.C. Arora and P. Arora, On p-quqsihyponormal operators for 0 < p < 1, Yokohma
Math. J, 41(1993), 25-29.

\bibitem{4} R.Curto, Applications of several complex variables to
multiparameter spectral theory, Surveys in some recent results in
operator theory, J.B.Conway and B.Morel, eds. Vol II, Pitnam. Res.
Notes in Math. Ser. 192, Longman Publ., Co, London, 1988, 25-90.
\bibitem{5} R.Curto, Problems in multivariable operator theory,
Comtemp.Math, 120, Amer.Math.Soc. Providence, R.I, 1991.

\bibitem{6} Subnormal operators, Research notes in Math, Putnam Advanced
Publication Program, 51(1981).

\bibitem{7} S.L. Campel and B.C.Gupta, On k-quasihyponormal operators,
Math. Japn, 23(1978), 185-189.

\bibitem{8} J. Eschmeier and M.Putnar, spectral decomposition and analytic
sheaves, Ox- ford University Press, Oxford, 1996.

\bibitem{9} T.Furuta, M. Ito and T. Yamazaki, A subclass of paranormal
operators includ- ing the class of log-hyponormal and several
related classes, Scientiae Math. Japn, 1(1998), 389-403.

\bibitem{10} R.Gelca, Compact perturbation of Fredhol n-tuples, Proc. Amer.
Math. Soc, 22(1994), 195-198.

\bibitem{11} I.H. Jeon and K.Tanahashi and A. Uchiyama, On qasisimilarity
for log- hyponormal operator, Glasgow Math. J, 46(2004), 169-176.
\bibitem{12} I.H. Kim, On (p; k)-quasihyponormal operators, Math. Ineq.
Appl, 7(2004), 629-638.

\bibitem{13} F.Kimura, Analysis of non-normal operator via Aluthge
transformation, In- tegr. Equat. Operat. Theor, 50(2004), 375-384.
\bibitem{14} M.Y. Lee and S.H. Lee, An extension of the Fuglede-Putnam's
theorem to p-quasihyponormal operators, Bull. Korean Math. Soc,
35(1998), 319-324.

\bibitem{15} M.Y.Lee, An extension of the Fuglede-Putnam's theorem to (p;
k)- quasihyponormal operators, Kyungpook Math. J, 44(2004),
593-596.

\bibitem{16} M.Y. Lee and S.H. Lee, Some generalized
theorems on p-quasihyponormal operators, 0 < p < 1,
Nihonkai.Math.J, 8(1997), 109-115.

\bibitem{17} K.B. Laursen, Operators with finite ascent, Pacific.J.Math,
152(1992).

\bibitem{18} K. Lowner, Uber monotone Matrixfunktionen, Math. Z, 38(1934),
177-216.

\bibitem{19} S.Mecheri, and A.Uchiyama, An extension of Fuglede Putnam-theorem to class A
operators, Math. Ineq. Appl, 13(2010), 57-61.
\bibitem{Mecheri11} S.Mecheri, Bishop's property, hypercyclicity and hyperinvariant subspace, Operator and Matrices, 8(2014), 555-562.
\bibitem{Mecheri22} S. Mecheri, Bishop's property, SVEP and Dunford property (C), Elect. Jour. Lin. Alg, 23 (2012), 523-529.
\bibitem{Mecheri33} S. Mecheri, On the normality of operators, Revista Colombiana de Matemáticas, 39(2005),87-95
\bibitem{Mecheri44} S. Mecheri, A. Mansour, On the operator equation $AXB-XD= E$, Lobachevskii Journal of Mathematics, 30(2009), 224-228.
\bibitem{Mecheri55} S. Mecheri, Weyl Type theorem for posinormal operator, Mathematical Proceedings of the Royal Irish Academy, 108(2008), 69-79.
\bibitem{20} M.Putinar, On Weyl's spectrum on several variables,
Math.Japonica, 50(1999), 355-357

\bibitem{21} M.Putinar, Quasisimilarity of tuples with Bishop's property
($\beta$), Integral equa- tions and operator theory, 15(1992),
1047-1052.

\bibitem{22} J. Snadar, Bishop's condition ($\beta$), Glasg. Math. J, 26(1985),
35-46.

\bibitem{23} K. Tanahashi and A. Uchiyama, Isolated points of spectrum of
p- quasihyponormal operators, Lin. alg. Appl, 382(2004), 221-229.

\bibitem{24} K.Tanahashi, A.Uchiyama and M. Cho, Isolated points of
spectrum of $(p; k)$- quasihyponormal operators, Lin. Alg. Appl,
382(2004),221-229.

\bibitem{25} J.L.Taylor, The analytic functional mathcalculus for several
commuting oper- ators, Acta Sci.Math, 125(1970), 1-38.

\bibitem{26} A.Uchiyama, An example of a p-quasihyponormal operator,
Yokohama.Math.J 46(1999)179-180.

\bibitem{27} A. Uchiyama, Weyl's theorem for class A operators, Math. Ineq.
Appl, 4(2001), 143-150

\bibitem{28} R. Wolff, Bishop's property $\beta$ for tensor product tuples of
operators, J.Operator Theory, 42(1999), 371-377.

\bibitem{29} Y.J.Woo, Quasi-similarity and injective p-quasihyponormal
operators, Bull Korean.Math.Soc, 42(2005),653-659.
\bibitem{30} D.Xia, Spectral theory of hyponormal operators, Birkhauser Verlag,
Boston, London, 1983.
\end{thebibliography}
\end{document}